\newcommand{\pp}{\mathbb{P}}
\newcommand{\zz}{\mathbb{Z}}
\renewcommand{\O}{\mathcal{O}}
\newcommand{\sH}{\mathscr{H}}
\newcommand{\Pic}{\operatorname{Pic}}
\newcommand{\rk}{\operatorname{rk}}
\newcommand{\Tsch}{\operatorname{Tsch}}
\newcommand{\DTs}{\operatorname{Tsch^\vee}}
\newcommand{\et}{\text{\'et}}
\newcommand{\pr}{\text{pr}}
\newcommand{\Gal}{\operatorname{Gal}}
\newcommand{\PGL}{\pp\mkern-2mu\operatorname{GL}}
\newcommand{\PSL}{\pp\mkern-2mu\operatorname{SL}}
\newcommand{\Sym}{\operatorname{Sym}}
\newcommand{\End}{\operatorname{End}}
\newcommand{\negmod}{\rightarrow}
\newcommand{\defi}[1]{\textsf{#1}} 
\newtheorem{thm}{Theorem}[section]
\newtheorem{lem}[thm]{Lemma}
\newtheorem{prop}[thm]{Proposition}
\newtheorem{cor}[thm]{Corollary}
\theoremstyle{definition}
\newtheorem{rem}[thm]{Remark}
\newtheorem{example}[thm]{Example}
\newtheorem{ques}[thm]{Question}
\title{Stability of Tschirnhausen Bundles}
\author{Izzet Coskun}
\address{Department of Mathematics, Statistics, and CS \\
University of Illinois at Chicago, Chicago IL 60607}
\email{icoskun@uic.edu}
\author{Eric Larson}
\address{Department of Mathematics, Brown University}
\email{elarson3@gmail.com}
\author{Isabel Vogt}
\address{Department of Mathematics, Brown University}
\email{ivogt.math@gmail.com}
\thanks{During the preparation of this article, I.C.\ was supported
by NSF FRG grant DMS-1664296 and NSF grant DMS-2200684,  E.L. was supported by
NSF  grants DMS-1802908 and DMS-2200641, and I.V. was supported by NSF grants DMS-1902743 and DMS-2200655.}
\keywords{Tschirnhausen bundles, stability}
\subjclass[2010]{Primary: 14H60. Secondary: 14B99.}
\begin{document}

\begin{abstract}
Let \(\alpha\colon X \to Y\) be a general degree \(r\) primitive map of nonsingular, irreducible, projective curves over an algebraically closed field of characteristic zero or larger than \(r\). We prove that the Tschirnhausen bundle of \(\alpha\) is semistable if \(g(Y) \geq 1\) and stable if \(g(Y) \geq 2\).    
    
\end{abstract}

\maketitle

\section{Introduction}

Let \(\alpha\colon X \to Y\) be a finite map of degree \(r\) between nonsingular, irreducible, projective curves of genera \(g(X)=g\) and \(g(Y)=h\), defined over an algebraically closed field of characteristic zero or larger than \(r\). The natural inclusion \(\O_Y \to \alpha_* \O_X\) given by pullback of functions admits a splitting by \(1/r\) times the trace map.  Hence, \[\alpha_* \O_X = \O_Y \oplus \DTs(\alpha)\] for a vector bundle \(\DTs(\alpha)\). The dual of \(\DTs(\alpha)\) is called the \defi{Tschirnhausen bundle} \(\Tsch(\alpha)\) associated to \(\alpha\).

The relative canonical embedding maps \(X\) into \(\mathbb{P} (\Tsch(\alpha))\).  Consequently, \(\Tsch(\alpha)\) plays a central role in the study of covers of curves and the geometry of the Hurwitz scheme.
A natural question raised, for example in \cite{dp22} and \cite{ll22} (and similar to questions raised in \cite{beauville}), is the following:

\begin{ques}\label{ques:main}
When is \(\Tsch(\alpha)\) (semi)stable?
\end{ques}

Recall that the \defi{slope} \(\mu(V)\) of a vector bundle \(V\) on a curve is  the ratio
\(\mu(V) = \frac{\deg(V)}{\rk(V)}\). The bundle \(V\) is called \defi{semistable} if, for every proper subbundle \(W\), we have
\(\mu(W) \leq \mu(V)\). The bundle \(V\) is \defi{stable} if the inequality is strict for every such \(W\). Semistable bundles are the building blocks of all bundles on a curve, via the Harder--Narasimhan filtration, and hence play a central role in their theory.  The moduli space of semistable vector bundles of fixed rank and degree is an irreducible projective variety.  A positive answer to Question~\ref{ques:main} implies the existence of a rational map from the Hurwitz scheme to the space of semistable vector bundles, and raises the possibility of understanding the moduli of covers via the moduli of bundles.

 The map \(\alpha\colon X \to Y\) is said to be \defi{primitive} if the induced map on \'etale fundamental groups \(\alpha_*\colon\pi_1^{\et}(X)\to \pi_1^{\et}(Y)\) is surjective.  In general, every finite map \(\alpha\colon X \to Y\) factors uniquely into a primitive map \(\alpha^\pr \colon X \to Z\) followed by an \'etale map \(\alpha^\et \colon Z \to Y\), where \(Z\) corresponds to the image \(\alpha_* \pi_1^\et(X)\) in  \(\pi_1^{\et}(Y)\). Since \(\DTs(\alpha^\et)\) is naturally a direct summand of \(\DTs(\alpha)\), while \(\deg(\DTs(\alpha^\et))=0\) and \(\deg(\DTs(\alpha))<0\) when \(\alpha\) is not \'etale (see Lemma \ref{lem-degTsch}), we conclude the following proposition.

\begin{prop}\label{prop-unstable}
Suppose the finite map \(\alpha\colon X \to Y\) has a nontrivial factorization \(\alpha = \alpha^\et \circ \alpha^\pr\), where \(\alpha^\pr\colon X \to Z\) is primitive and \(\alpha^\et\colon Z \to Y\) is \'etale. Then \(\Tsch(\alpha)\) is unstable.
\end{prop}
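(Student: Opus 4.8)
The plan is to produce a destabilizing subbundle of $\DTs(\alpha)$ and then transfer the conclusion to its dual. Recall first that a vector bundle $E$ on a smooth projective curve is semistable if and only if $E^\vee$ is: dualizing the defining sequence $0 \to F \to E \to E/F \to 0$ of a subbundle interchanges subbundles and quotients while negating slopes, so a subbundle $F \subset E$ with $\mu(F) > \mu(E)$ yields a subbundle $(E/F)^\vee \subset E^\vee$ with $\mu\bigl((E/F)^\vee\bigr) = -\mu(E/F) > -\mu(E) = \mu(E^\vee)$, and conversely via $(E^\vee)^\vee = E$. Since $\Tsch(\alpha) = \DTs(\alpha)^\vee$, it therefore suffices to exhibit a proper nonzero subsheaf $F \subset \DTs(\alpha)$ with $\mu(F) > \mu(\DTs(\alpha))$.

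The natural candidate is $F = \DTs(\alpha^\et)$. As recorded before the statement, $\DTs(\alpha^\et)$ is a direct summand of $\DTs(\alpha)$ — concretely, pushing $(\alpha^\pr)_* \O_X = \O_Z \oplus \DTs(\alpha^\pr)$ forward along $\alpha^\et$ gives $\alpha_* \O_X = \O_Y \oplus \DTs(\alpha^\et) \oplus (\alpha^\et)_* \DTs(\alpha^\pr)$, so $\DTs(\alpha^\et)$ sits inside $\DTs(\alpha)$ with explicit complement $(\alpha^\et)_* \DTs(\alpha^\pr)$. In particular it is a subbundle. Because the factorization is nontrivial, $\alpha^\et$ has degree at least $2$, so $\rk \DTs(\alpha^\et) = \deg \alpha^\et - 1 \geq 1$ and $F$ is nonzero; and $F$ is proper, since a full-rank direct summand would equal $\DTs(\alpha)$ and force $\deg \DTs(\alpha) = \deg \DTs(\alpha^\et) = 0$, contradicting $\deg \DTs(\alpha) < 0$.

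It then remains only to compare slopes. By Lemma \ref{lem-degTsch} one has $\deg \DTs(\alpha^\et) = 0$, so $\mu(\DTs(\alpha^\et)) = 0$, whereas $\deg \DTs(\alpha) < 0$ with $\rk \DTs(\alpha) = r - 1 > 0$ gives $\mu(\DTs(\alpha)) < 0$. Hence $\mu(\DTs(\alpha^\et)) = 0 > \mu(\DTs(\alpha))$, so $\DTs(\alpha^\et)$ violates the semistability inequality and $\DTs(\alpha)$ is unstable; dualizing, $\Tsch(\alpha)$ is unstable, as claimed.

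I do not expect a genuine obstacle here: all the real content is already packaged in Lemma \ref{lem-degTsch} and in the étale–primitive factorization, namely that $\DTs(\alpha^\et)$ is a direct summand of $\DTs(\alpha)$ and that the two relevant degrees are $0$ and strictly negative. The only points demanding care are formal: confirming that the candidate subbundle is proper and nonzero (which, as above, uses nontriviality of the factorization together with $\deg \DTs(\alpha) < 0$), and checking that instability is correctly preserved under the duality $\Tsch(\alpha) = \DTs(\alpha)^\vee$.
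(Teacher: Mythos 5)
Your proof is correct and takes essentially the same route as the paper, whose argument is exactly the observation that \(\DTs(\alpha^\et)\) is a direct summand of \(\DTs(\alpha)\) of slope \(0\), while \(\deg \DTs(\alpha) < 0\) by Lemma \ref{lem-degTsch}, so that this summand destabilizes. You simply make explicit the routine points the paper leaves implicit: the complement \((\alpha^\et)_* \DTs(\alpha^\pr)\) in the pushforward decomposition, the properness and nonvanishing of the summand (using nontriviality of the factorization), and the fact that instability of \(\DTs(\alpha)\) passes to its dual \(\Tsch(\alpha)\).
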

The (semi)stability problem for Tschirnhausen bundles is therefore only interesting when \(\alpha\) is either primitive or \'etale.

\subsection*{The \'etale case} If \(\alpha \colon X \to Y\) is \'etale, then the Galois group \(\Gal(X/Y)\) acts on the fibers of \(\alpha\) by permutation. Fixing a basepoint, this action realizes \(\Gal(X/Y)\) as a subgroup of the symmetric group \(\mathfrak{S}_r\). We will study this case in Section~\ref{sec-etale}.  The following proposition, which over the complex numbers is a simple case of the Narasimhan--Seshadri correspondence \cite{ns65},  characterizes the stability of Tschirnhausen bundles in the \'etale case.

\begin{prop}\label{prop-etale}
Let \(\alpha\colon X \to Y\) be an \'etale cover of degree \(r\). Then \(\Tsch(\alpha)\) is semistable. Furthermore, \(\Tsch(\alpha)\) is stable if and only if the restriction of the standard representation of \(\mathfrak{S}_r\) to \(\Gal(X/Y)\) is irreducible.
\end{prop}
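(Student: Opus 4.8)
The plan is to realize \(\DTs(\alpha)\) and its dual \(\Tsch(\alpha)\) as the flat bundles associated to representations of the finite monodromy group \(G := \Gal(X/Y)\), and to read off (semi)stability from representation theory; this is precisely the content of the Narasimhan--Seshadri dictionary in characteristic zero, but I would phrase it as a descent argument that works uniformly in all the allowed characteristics. Let \(\pi\colon \widetilde X \to Y\) be the Galois closure of \(\alpha\), a connected \'etale \(G\)-cover, and let \(H \le G\) be the stabilizer of a point of the fiber, so that \(G/H\) is identified with the degree-\(r\) fiber and \(G \hookrightarrow \mathfrak S_r\) is the inclusion in the statement. For a \(k\)-representation \(W\) of \(G\), faithfully flat descent along \(\pi\) of the \(G\)-equivariant trivial bundle \(W \otimes_k \O_{\widetilde X}\) produces a bundle \(E_W\) on \(Y\); this assignment is an exact \(k\)-linear functor, and descent identifies \(\pi_* \O_{\widetilde X}\) with \(E_{k[G]}\). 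Taking \(H\)-invariants then identifies \(\alpha_* \O_X\) with \(E_{k[G/H]}\), the bundle of the permutation representation. Since \(\operatorname{char} k \nmid r\), the permutation representation splits \(G\)-equivariantly as \(k[G/H] = \mathbf 1 \oplus V\) with \(V = \{\textstyle\sum x_i = 0\}\) the restriction to \(G\) of the standard representation of \(\mathfrak S_r\); matching this with the trace splitting \(\alpha_* \O_X = \O_Y \oplus \DTs(\alpha)\) gives \(\DTs(\alpha) \cong E_V\).

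For semistability I would observe that \(\pi^* E_W = W \otimes_k \O_{\widetilde X}\) is trivial, so \(\deg E_W = 0\); moreover any subbundle \(S \subseteq E_W\) pulls back to a subbundle of a trivial bundle on the projective curve \(\widetilde X\), which has nonpositive degree, whence \(\deg S = (\deg \pi)^{-1}\deg(\pi^* S) \le 0\). Thus \(E_W\) is semistable of degree zero for every \(W\); applying this to \(W = V\) shows \(\DTs(\alpha) = E_V\) is semistable, and dualizing (which preserves both degree zero and semistability) shows \(\Tsch(\alpha)\) is semistable. In characteristic zero this is exactly Narasimhan--Seshadri applied to the unitary representation \(G \hookrightarrow \mathfrak S_r\).

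For the stability criterion I would set up a bijection between subrepresentations \(U \subseteq V\) and degree-zero saturated subbundles of \(E_V\). One direction is just the exactness of \(W \mapsto E_W\): a subrepresentation \(U\) yields a degree-zero subbundle \(E_U \subseteq E_V\). For the converse, a degree-zero saturated subbundle \(S \subseteq E_V\) pulls back to a degree-zero saturated subbundle \(\pi^* S\) of the trivial bundle \(V \otimes_k \O_{\widetilde X}\), and the \(G\)-equivariance of \(\pi^* S\) should force it to come from a subrepresentation. The crux is the lemma that such a subbundle is \emph{constant}, i.e.\ of the form \(U \otimes_k \O_{\widetilde X}\) for a linear subspace \(U \subseteq V\): its dual is a globally generated bundle of degree zero on a projective curve, hence trivial, so \(\pi^* S\) itself is trivial and determined by \(U = H^0(\pi^* S)\subseteq V\); equivariance then makes \(U\) a subrepresentation and \(S = E_U\) by descent. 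Granting this dictionary, \(E_V\) is stable (being semistable of degree zero) if and only if it has no proper nonzero degree-zero subbundle, equivalently \(V\) has no proper nonzero subrepresentation, i.e.\ \(V\) is irreducible; since \(\Tsch(\alpha) = E_V^\vee\) and duality preserves stability, this is the asserted criterion. The main obstacle is the constancy lemma, which is what upgrades the easy ``subrepresentations give subbundles'' into the full correspondence, and hence into the forward direction of stability; the rest is formal descent together with the elementary degree bound.
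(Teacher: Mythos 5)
Your proposal is correct and follows essentially the same route as the paper: pass to the Galois closure where \(\alpha_*\O_X\) pulls back to a trivial bundle, deduce semistability from the degree bound on subsheaves of trivial bundles, split the permutation representation as trivial plus standard, and match degree-zero subbundles with subrepresentations via the constancy of equivariant degree-zero subbundles of a trivial bundle on a proper curve. The only (cosmetic) difference is how that constancy is justified --- you argue the dual is a globally generated degree-zero bundle, hence trivial, and take \(U = H^0(\pi^*S)\), whereas the paper concludes \(\beta^*F \simeq \O_Z^{\oplus k}\) directly and shows the \(G\)-action is by constant matrices using properness of \(Z\); both yield the same dictionary.
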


\subsection*{The primitive case} When the characteristic of the base field is zero or greater than \(r\), then the Hurwitz space of primitive degree \(r\) genus \(g\) covers of \(Y\) is irreducible if the degree of the ramification divisor is positive (see \S \ref{subsec-irreducible}) and otherwise empty. Hence, we can refer to the general degree \(r\) genus \(g\) cover of \(Y\). The main theorem of our paper, whose proof is in Sections~\ref{sec-genus1} and \ref{sec-generalcase}, is the following.

\begin{thm}\label{thm:tsch}
Let \(\alpha \colon X \to Y\) be a general primitive degree \(r\) cover, where \(X\) has genus \(g\) and \(Y\) has genus \(h\), over an algebraically closed field of characteristic zero or greater than \(r\).  
\begin{enumerate}
   \item\label{main_h1} If \(h=1\), then \(\Tsch(\alpha)\) is  semistable.
    \item\label{main_h2} If \(h\geq 2\), then \(\Tsch(\alpha)\) is stable.
\end{enumerate}
\end{thm}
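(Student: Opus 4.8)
The plan is to exhibit a single primitive cover whose Tschirnhausen bundle is (semi)stable and then invoke the irreducibility of the Hurwitz space together with the openness of (semi)stability in flat families. Since \(\deg \Tsch(\alpha) = b/2\), where \(b = 2g-2-r(2h-2)\) is the number of branch points (Lemma \ref{lem-degTsch}), the bundle we must control has slope \(\mu = \tfrac{b}{2(r-1)}\). To produce a convenient representative, I would degenerate the target \(Y\) to an integral rational nodal curve \(Y_0\) of arithmetic genus \(h\), obtained from \(\pp^1\) by gluing \(h\) pairs of general points \((p_i,q_i)\). A general primitive cover then degenerates to an admissible cover \(\alpha_0\colon X_0 \to Y_0\): over the normalization this is a branched cover of \(\pp^1\), and at each node it is \'etale, given by a gluing permutation \(\sigma_i \in \mathfrak{S}_r\). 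For the limit to remain primitive — equivalently, for \(X_0\) to be connected with full monodromy — one chooses the branch-point monodromies together with the \(\sigma_i\) to generate a primitive subgroup of \(\mathfrak{S}_r\), and one checks that the resulting boundary point deforms to a general smooth primitive cover. Because \(Y_0\) is integral, (semi)stability of a locally free sheaf on \(Y_0\) is defined by slopes of subsheaves and is an open condition along the smoothing, so it suffices to establish it on the central fiber.

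Next I would identify the limit bundle. The formation of \(\alpha_* \O\) is compatible with this degeneration, so \(\Tsch(\alpha_0)\) is the vector bundle on \(Y_0\) whose pullback to \(\pp^1\) is the Tschirnhausen bundle \(E_0\) of the genus-\(0\) cover, equipped at each node with the gluing isomorphism of the fiber over \(p_i\) with the fiber over \(q_i\) induced by \(\sigma_i\) acting through the standard representation of \(\mathfrak{S}_r\) (the same representation that governs the \'etale case, cf.\ Proposition \ref{prop-etale}). The essential input is the classical fact that for a general cover of \(\pp^1\) the bundle \(E_0\) is \emph{balanced}: \(E_0 \cong \bigoplus_{j} \O_{\pp^1}(a_j)\) with all \(a_j \in \{\lceil \mu \rceil, \lfloor \mu \rfloor\}\). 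Such a balanced bundle on \(\pp^1\) is semistable only when \((r-1)\mid (b/2)\) and is otherwise unstable; the point of the construction is that gluing at the nodes can only improve stability, and this is exactly the mechanism that distinguishes \(h=0,1,2\).

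Finally I would run the stability analysis on \(Y_0\). A subsheaf \(\mathcal F \subseteq \Tsch(\alpha_0)\) with \(\mu(\mathcal F) \geq \mu\) pulls back and saturates to a subbundle \(F = \bigoplus_j \O_{\pp^1}(c_j) \subseteq E_0\), and by the theory of bundles on nodal curves \[\deg_{Y_0} \mathcal F = \deg_{\pp^1} F - \sum_{i=1}^{h} \bigl(\rk F - \dim(F_{q_i} \cap \sigma_i(F_{p_i}))\bigr),\] so each node imposes a nonnegative degree penalty that vanishes exactly when \(\sigma_i(F_{p_i}) = F_{q_i}\). Since \(E_0\) is balanced we have \(\deg_{\pp^1} F \leq \lceil \mu \rceil \cdot \rk F\), so a destabilizing \(\mathcal F\) must have almost all of these penalties zero, i.e.\ its fibers must be matched by the \(\sigma_i\) at almost every node. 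I expect the crux to be precisely this matching problem: for general \(\sigma_i\) acting through the irreducible standard representation, and general node points \(p_i,q_i\), one must show that no proper subbundle \(F \subsetneq E_0\) of slope close to \(\mu\) can satisfy the matching condition at two or more nodes. A single node (\(h=1\)) imposes one such condition, which a suitable subbundle of slope exactly \(\mu\) can meet, yielding semistability; two or more general nodes (\(h \geq 2\)) over-determine the subbundle and force \(\mu(F) < \mu\), yielding stability. The generality hypothesis enters exactly through the full symmetric monodromy, which also guarantees (consistently with Proposition \ref{prop-unstable}) that \(\alpha\) has no intermediate cover and hence no destabilizing subsheaf arising from a subalgebra of \(\alpha_* \O_X\).
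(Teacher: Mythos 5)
Your overall architecture (degenerate, identify the limit bundle, use openness of (semi)stability and irreducibility of the Hurwitz space) is sound and parallels the paper's use of admissible covers, but the decisive step is asserted rather than proved, and the heuristic offered for it does not work. You reduce everything to the claim that for \(h \geq 2\) no proper subbundle \(F \subset E_0\) of near-maximal slope can satisfy the matching condition \(\sigma_i(F_{p_i}) = F_{q_i}\) at the nodes, appealing to ``general \(\sigma_i\) acting through the irreducible standard representation.'' But each gluing datum is a \emph{single} permutation, generating a cyclic subgroup of \(\mathfrak{S}_r\), and the restriction of the standard representation to a cyclic group is reducible for every \(r > 2\) (this is exactly Example \ref{ex:cyclic}); so irreducibility cannot rule out matched subspaces even at one node. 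Worse, since \(E_0\) is balanced but not trivial, the fibers \(F_{p_i}, F_{q_i}\) of a subbundle are not constant subspaces cut out by representation theory at all --- their position inside the fiber of \(E_0\), and its relation to the sheet-labelling that defines \(\sigma_i\), depends on the geometry of the cover, so controlling \(\dim(F_{q_i} \cap \sigma_i(F_{p_i}))\) requires a genuine general-position or monodromy argument over the configuration space that you have not supplied. The numerology is also tight: from balancedness you only get \(\deg_{\pp^1} F \leq \lceil \mu \rceil \rk F\), which can exceed \(\mu \rk F\) by nearly \(\rk F\), so each node must contribute a near-maximal penalty; and your \(h = 1\) claim that a single node yields semistability is in the same unproven state. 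It is telling that the paper, even after degenerating, cannot close the \(h=1\) case by any such direct count: it needs the divisibility statement \(g \mid \deg F\), proved via the rigidity theorem applied to the map \(E^{\times g} \to \Pic^{\deg F} E\) recording the determinant of the destabilizing subbundle.

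For comparison, the paper routes around your over-determination problem entirely. For \(h = 1\) it keeps the base a \emph{smooth} elliptic curve and degenerates \(X\) instead, gluing pairs of points in fibers of a cyclic \'etale cover \(E' \to E\); then Remark \ref{rem-subrep} identifies any degree-\(0\) subsheaf with a subrepresentation of the standard representation of \(\zz/r\zz\), the diagonal conditions kill those, and the rigidity argument handles negative-degree subsheaves. For \(h \geq 2\) it inducts on \(h\) via admissible covers over \(Y_1 \cup_p Y_2\) with \(Y_1\) elliptic, using \cite[Lemma 4.1]{clv}: semistable on one component plus stable on the other implies stable on the nodal curve. Stability on the elliptic component comes for free because \(\deg \DTs(\alpha_1) = -1\) is coprime to any rank, so semistable already implies stable there. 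That inductive structure replaces your global matching analysis at \(h\) nodes with a single gluing lemma applied once per induction step, which is why the paper never has to solve the problem your proposal leaves open.
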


\begin{rem}
When the characteristic of the base field is greater than \(r\), one can strengthen (semi)stable in Theorem \ref{thm:tsch} to strongly (semi)stable (see Corollary \ref{cor-strong}).
\end{rem}

\begin{rem}
By the Atiyah classification \cite{at57},  on a curve of genus 1 there exist stable bundles if and only if the degree and the rank are coprime. Hence, in general semistability is the best one can hope for.
\end{rem}

\begin{rem}
By the Birkhoff--Grothendieck Theorem, every vector bundle on \(\pp^1\) splits as a direct sum of line bundles \(V \simeq \bigoplus_{i=1}^{\rk(V)} \O_{\pp^1}(e_i)\). Hence on \(\pp^1\), there are no stable bundles of rank larger than one, and there exist semistable bundles only if the rank divides the degree.
The best one can hope for is thus that the bundle is \defi{balanced}, i.e., \(|e_i -e_j| \leq 1\) for all \(i\) and \(j\).
When the rank divides the degree, a balanced vector bundle on \(\pp^1\) is called \defi{perfectly balanced} and is semistable.

\begin{prop}[\cite{balico89}]\label{thm-genus0}
Let \(\alpha \colon X \to \pp^1\) be a general degree \(r\) cover, where \(X\) has genus \(g\), defined over an algebraically closed field of characteristic zero or greater than \(r\). Then \(\Tsch(\alpha)\) is balanced.
\end{prop}

 For completeness, we will give a short proof of this well-known fact in \S \ref{sec-rational}. Considering the loci in the Hurwitz space where the Tschirnhausen bundle is \textit{unbalanced} yields an important stratification.  For example,
when the degree of \(\alpha\) is \(3\), we have \(\Tsch(\alpha) \simeq \O_{\pp^1}(e_1) \oplus \O_{\pp^1}(e_2)\) and the difference \(e_1- e_2\) is the classical \defi{Maroni invariant} of the trigonal curve.
In general, the possible splitting types are known as \defi{scrollar invariants}.  Classifying scrollar invariants has attracted a lot of attention (see, for example, \cite{balico, cvz22, dp22, Vem22}).
\end{rem}

\begin{rem}
There are fascinating number-theoretic analogues of Tschirnhausen bundles.
For example, the integers appearing in the splitting type of the Tschirnhausen bundle on \(\pp^1\) have close connections to the so-called ``shapes of number fields'', see \cite{Vem22}.
\end{rem}

\subsection*{Strategy} In \S \ref{sec-etale}, we first study the case of \'etale covers and prove Proposition \ref{prop-etale}. We then analyze the case \(h=1\), by specializing \(X\) to a nodal curve to leverage our understanding of the \'etale case. We finally use the theory of admissible covers to specialize the base \(Y\) to a reducible curve. This allows us to prove Theorem \ref{thm:tsch} by induction on \(h\).

\subsection*{History and related work} Kanev proves the stability of \(\Tsch(\alpha)\) for general coverings of degree \(r \leq 5\) for sufficiently large \(g\) in his work on the unirationality of Hurwitz spaces \cite{kanev}.

Deopurkar and Patel in \cite[Theorem 1.5]{dp22} prove that \(\Tsch(\alpha)\) is stable for general coverings when \(h \geq 2\) and \(g\) is sufficiently large (depending on \(h\) and \(r\)). In fact, they prove that the rational map from the Hurwitz space to the moduli space of stable bundles on \(Y\) is dominant if \(g \gg 0\). More generally, they asymptotically answer the question of when a bundle can be realized as the Tschirnhausen bundle associated to a cover. By contrast, our Theorem \ref{thm:tsch} applies even in cases when the map from the Hurwitz space to the moduli space of stable bundles cannot be dominant.

Landesman and Litt \cite[Example 1.3.8]{ll22} prove that the Tschirnhausen bundle is semistable under suitable assumptions and a better bound on the genus \(g\). More generally, their methods apply to the semistability of the vector bundle associated to any irreducible \(G\)-representation \(\rho\) and general \(G\)-cover \(\alpha \colon X \to Y\), when \(\dim \rho < 2 \sqrt{g + 1}\). This semistability problem for \(G\)-covers with \(g\) small, and the stability for any \(g\), remain open (except for the standard representation of \(\mathfrak{S}_r\) which is the content of Theorem~\ref{thm:tsch}).

\subsection*{Acknowledgements} We would like to thank Anand Deopurkar, Aaron Landesman, Daniel Litt and  Anand Patel for rekindling our interest in the problem and for invaluable discussions. The first author would like to thank Chien-Hao Liu and Prof. Shing-Tung Yau for bringing the problem to his attention and many valuable conversations. Finally, we thank the anonymous referees for helpful feedback that have improved the paper. 

\section{Preliminaries}
\noindent
In this section, we collect basic facts concerning Tschirnhausen bundles and Hurwitz schemes.

\subsection{Tschirnhausen bundles} Let \(\alpha\colon X \to Y\) be a finite map of degree \(r\) between nonsingular, irreducible, projective curves of genera \(g(X)=g\) and \(g(Y)=h\). Let \(\DTs(\alpha)\) be the dual of the  Tschirnhausen bundle associated to \(\alpha\).

\begin{lem}\label{lem-degTsch}
Let \(b\) be the degree of the ramification divisor of \(\alpha\colon X \to Y\). Then \(\DTs(\alpha)\) is a vector bundle of rank \(r-1\) and degree \(-b/2\). In particular, if \(\alpha\) is \'etale, then \(\deg(\DTs(\alpha))=0\), and if \(\alpha\) is branched, then \(\deg(\DTs(\alpha)) < 0\).
\end{lem}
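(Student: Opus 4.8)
The plan is to separate the rank computation, which is immediate from flatness, from the degree computation, which I would obtain by comparing Euler characteristics on \(X\) and \(Y\) and then invoking Riemann--Hurwitz. For the rank: since \(X\) and \(Y\) are nonsingular curves and \(\alpha\) is finite, \(\alpha\) is automatically flat (a finite morphism with Cohen--Macaulay source to a smooth curve is flat), so \(\alpha_* \O_X\) is locally free of rank \(\deg\alpha = r\). The splitting \(\alpha_* \O_X = \O_Y \oplus \DTs(\alpha)\) then forces \(\rk \DTs(\alpha) = r - 1\).

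For the degree, I would exploit that \(\alpha\) is finite, hence affine, so the higher direct images vanish and \(H^i(Y, \alpha_* \O_X) = H^i(X, \O_X)\); this gives \(\chi(\alpha_* \O_X) = \chi(\O_X) = 1 - g\). Applying Riemann--Roch on \(Y\) to the rank-\(r\) bundle \(\alpha_* \O_X\) yields \(\chi(\alpha_* \O_X) = \deg(\alpha_* \O_X) + r(1 - h)\). Since degree is additive in direct sums and \(\deg \O_Y = 0\), we have \(\deg(\alpha_* \O_X) = \deg \DTs(\alpha)\), and combining the two expressions gives the closed form \(\deg \DTs(\alpha) = (1 - g) - r(1 - h)\).

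The substantive step is to rewrite this in terms of \(b\) via Riemann--Hurwitz, and this is where the characteristic hypothesis (zero or larger than \(r\)) is essential: it guarantees that all ramification is tame, so a ramification point of index \(e\) contributes exactly \(e - 1\) to the ramification divisor \(R\), and Riemann--Hurwitz reads \(2g - 2 = r(2h - 2) + \deg R\). For a cover with simple branching each of the \(b\) branch points lies under a single ramification point of index \(2\), so \(\deg R = b\) (more generally \(b\) is the degree of the branch divisor \(\alpha_* R\)); solving for \(\deg R\) and substituting gives \(\deg \DTs(\alpha) = -\tfrac{1}{2}\deg R = -b/2\). The dichotomy then follows at once: an \'etale \(\alpha\) has \(b = 0\) and hence degree \(0\), while a branched \(\alpha\) has \(b > 0\) and hence strictly negative degree. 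I expect the only genuine subtlety to be this bookkeeping: one must state explicitly that tameness is what makes each ramification contribution equal to \(e - 1\), since in characteristic dividing \(r\) wild ramification contributes more and the clean identity \(\deg \DTs(\alpha) = -b/2\) would fail. The cohomological inputs (flatness, affineness, Riemann--Roch) are entirely standard.
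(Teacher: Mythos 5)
Your proof is correct and takes essentially the same route as the paper's: both compute the rank from the splitting of \(\alpha_*\O_X\), obtain \(\deg\DTs(\alpha) = (1-g)-r(1-h)\) by combining \(\chi(\O_X)=\chi(\alpha_*\O_X)\) with Riemann--Roch on \(Y\), and then substitute Riemann--Hurwitz. Your extra remarks on flatness, tameness, and the identification of \(b\) (number of branch points, under simple branching) with the degree of the ramification divisor are details the paper's proof leaves implicit, but they do not change the argument.
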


\begin{proof}
Since \(\alpha_* \O_X\) has rank \(r\), the rank of \(\DTs(\alpha)\) is \(r-1\). The relation  \(\alpha_* \O_X = \O_Y \oplus \DTs(\alpha)\) implies that \(\deg(\alpha_* \O_X)= \deg(\DTs(\alpha))\). Hence, by the Riemann--Roch formula \[1-g = \chi(\O_X)=\chi(\alpha_* \O_X)= \deg(\DTs(\alpha)) + r(1-h).\] By the Riemann--Hurwitz formula, we have
\(g-1= r(h-1) + \frac{1}{2} b\), where \(b\) is the degree of the ramification divisor. We conclude that
\[\deg (\DTs(\alpha)) = - \frac{b}{2}. \qedhere\]
\end{proof}

 Recall that when the degree and the rank are coprime, a semistable bundle is automatically stable. Similarly, a bundle is (semi)stable if and only if its dual is (semi)stable. In positive characteristic, a bundle \(V\) is \defi{strongly (semi)stable} if \(V\) is (semi)stable and remains (semi)stable under pullback by all iterates of the Frobenius morphism.

\subsection{Irreducibility of the primitive component in large characteristic}\label{subsec-irreducible}

Fix a curve \(Y\) of genus \(h \geq 1\), and let \(\sH_{r,g}(Y)\) denote the simply-branched
Hurwitz space, parameterizing primitive degree \(r\) genus \(g\) covers of \(Y\)
with a positive number of branch points.
When the ground field is the complex numbers, topological calculations of Gabai and Kazez \cite{gk90}
(generalizing earlier calculations of Clebsch \cite{clebsch} in the case \(Y = \pp^1\))
establish that \(\sH_{r,g}(Y)\) is non-empty and irreducible.
Fulton \cite{fulton} showed that \(\sH_{r,g}(\pp^1)\) is irreducible in any characteristic larger than \(r\) by reduction to characteristic zero and application of Clebsch's result.
However, Fulton's reduction argument does not rely essentially on the base being \(\pp^1\). Using Fulton's reduction combined with Gabai and Kazez's result therefore yields the following.

\begin{prop}[Clebsch, Fulton, Gabai--Kazez]\label{prop-Hurwitz}
Let \(Y\) be a smooth and irreducible curve defined over an algebraically closed field of characteristic larger than \(r\).  Then the primitive, simply-branched Hurwitz space \(\mathscr{H}_{r,g}(Y)\) of degree \(r\), genus \(g\) covers of \(Y\) is irreducible.
\end{prop}

\begin{proof}
We apply the argument in \cite{fulton}, mutatis mutandis, to reduce to characteristic zero
(in which case the proposition holds by \cite{gk90}).
For the reader's convenience, we briefly sketch the reduction argument of \cite{fulton}.

Extracting the branch locus gives a natural \'etale map
\(\sH_{r,g}(Y) \to \operatorname{Sym}^{2g - 2 - r(2h - 2)}(Y) \smallsetminus \Delta\).
If in addition the characteristic exceeds \(r\), then by using Abhyankar's lemma \cite[Lemma 3.6 of Expose X]{SGA1},
one can show that this morphism is proper, and thus finite \'etale.
(For details, see \cite[Theorem 7.2]{fulton}, which is stated in the case \(Y = \pp^1\),
but the argument given works in general.)

We claim that the diagonal \(\Delta \subset \operatorname{Sym}^{2g - 2 - r(2h - 2)}(Y)\) remains reduced
in odd positive characteristic.  When \(Y = \pp^1\), this follows from \cite[Lemma A.3]{fulton}.
The general case follows by a simple modification of the proof given in 
\cite[Lemma A.3]{fulton}, 
as we now explain.
Set \(w = 2g - 2 - r(2h - 2)\).
Consider an arbitrary element \(Q\) of \( \operatorname{Sym}^w(Y)\),
corresponding to a multiset of points with multiplicities
\(w_1 + w_2 + \cdots + w_k = w\).
We want to show that \(\Delta\) is reduced in a neighborhood of \(Q\).
In an \'etale neighborhood of \(Q\), the scheme \(\operatorname{Sym}^w(Y)\) is isomorphic
to \(\operatorname{Sym}^{w_1}(Y) \times \operatorname{Sym}^{w_2}(Y) \times \cdots \times \operatorname{Sym}^{w_k}(Y)\),
under which the diagonal is the union of pullbacks of the diagonals from each factor.
It therefore suffices to establish the claim when \(w = w_1\), i.e., all of the points in the multiset \(Q\) coincide.
Choose a local coordinate \(y\) for \(Y\) at this common point,
so that \(b_1, b_2, \ldots, b_w\) give local coordinates for \(\operatorname{Sym}^w(Y)\) in a neighborhood of \(Q\)
by associating to a polynomial \(P = y^w + b_1 y^{w-1} + \cdots + b_w\) its roots \(\xi_1, \xi_2, \ldots, \xi_w\).
The equation of \(\Delta\) is then the discriminant of \(P\).
Suppose for sake of contradiction that this discriminant factors as \(fg^2\) with \(g\) nonconstant.
Substituting the \(b_i\) for the elementary symmetric functions of the \(\xi_j\),
the polynomial \(g\) is divisible by some \(\xi_i - \xi_j\).
Since \(g\) is symmetric, it is thus divisible by \(\prod_{i<j} (\xi_i - \xi_j)\).
Up to rescaling, we must therefore have \(g = \prod_{i<j} (\xi_i - \xi_j)\) and \(f=1\).
But in odd characteristic, \(\prod_{i<j} (\xi_i - \xi_j)\) is not a symmetric function,
since under exchanging \(\xi_1\) and \(\xi_2\), it is sent to its negative.
This establishes that \(\Delta\) is reduced as claimed.

Since an irreducible finite \'etale cover of a smooth projective variety
cannot specialize to a reducible one unless the branch locus becomes nonreduced
(see \cite[Theorem 3.3]{fulton}), we conclude from the irreducibility of \(\sH_{r,g}(Y)\) in characteristic zero that
\(\sH_{r,g}(Y)\) remains irreducible in characteristic larger than \(r\).
\end{proof}

\section{The \'etale case}\label{sec-etale}

In this section,  we study the case when \(\alpha\colon X \to Y\) is \'etale and prove Proposition \ref{prop-etale}. 
Let \(\alpha\colon X \to Y\) be a degree \(r\) \'etale cover of nonsingular, irreducible, projective algebraic curves. Since the characteristic of the base field is zero or larger than \(r\), the map \(\alpha\) is separable. Let \(\beta\colon Z \to Y\) denote the  Galois closure of \(\alpha\) with Galois group \(G = \Gal(X/Y)\).  The \'etale cover \(\alpha\colon X \to Y\) corresponds to a subgroup \(H \subset G\).

\begin{proof}[Proof of Proposition \ref{prop-etale}]
We have that  \(\beta^* (\alpha_* \O_Y) = \O_Z^{\oplus r}\). Suppose that \(F \subset \DTs(\alpha)\) is a subbundle. Then \(\beta^* F \subset \beta^* \DTs(\alpha) \subset \O_Z^{\oplus r}\). We conclude that \(\deg(\beta^* F) \leq 0\). Hence, \(\deg(F) \leq 0\). Since \(\deg(\DTs(\alpha))=0\), it follows that \(\DTs(\alpha)\) is semistable.

To determine when \(\DTs(\alpha)\) is stable,
we start by choosing coset representatives \(\{ g_i H \}_{i=1}^r\) of \(H\) in \(G\). Then the bundle 
 \[ V= \bigoplus_{g_i H} \O_Z\] has a \(G\)-action acting by permutations on the cosets,
which is \(\beta\)-equivariant with respect to the natural action of \(G\) on \(Z\).
Let \(P\) be the permutation representation of the symmetric group \(\mathfrak{S}_r\) on an \(r\)-dimensional vector space. The representation \(P\) decomposes as a direct sum of two irreducible representations, the trivial representation \(T\) of dimension one and the standard representation \(S\) of dimension \(r-1\). This induces a decomposition
\[V= \bigoplus_{g_i H} \O_Z = (S \otimes \O_Z) \oplus (T \otimes \O_Z),\]
and a \(\beta\)-equivariant \(G\)-action on the bundle \(E\colon= S \otimes \O_Z\). Since the quotient of \(V\) by \(G\) is the bundle
\(\alpha_* \O_X\) on \(Z/G = X\), the quotient of \(E\) by \(G\) is \(\DTs(\alpha)\).  Suppose that \(F \subset \DTs(\alpha)\) is a proper subbundle of \(\DTs(\alpha)\) of rank \(k\) such that \(\mu(F) =0\), so that \(\DTs(\alpha)\) is strictly semistable. Then \(\beta^* F \subset E\) has slope \(0\). Since \(E\) is trivial, we conclude that \(\beta^*F \simeq \O_Z^{\oplus k} \). 

For any \(g \in G\), consider the action of \(g\) on \(\beta^* F\). This corresponds to a \(k \times k\) matrix of regular functions on \(Z\). Since \(Z\) is proper, this matrix must be a constant matrix. Hence, the action does not depend on the choice of point. We conclude that \(\beta^* F = V \otimes \O_Z\) for a subrepresentation \(V \subset S\). Hence \(S\) is a reducible representation.

Conversely, suppose \(S\) is a reducible representation of \(G\) with a subrepresentation \(W\). Then the quotient of \(W \otimes \O_Z\) by \(G\) is a subbundle of \(\DTs(\alpha)\) of slope zero and rank the dimension of \(W\). We conclude that \(\DTs(\alpha)\) is strictly semistable.
\end{proof}

\begin{rem}\label{rem-subrep}
The proof of Proposition \ref{prop-etale} in fact shows that any degree 0 subbundle of \(\DTs(\alpha)\) corresponds to a subrepresentation of the restriction of the standard representation.
\end{rem}

\begin{example}\label{ex:cyclic}
When \(\alpha\) is a cyclic Galois cover of degree \(r >2\), the standard representation is never irreducible, and consequently \(\Tsch(\alpha)\) is never stable.  Explicitly, if \(\alpha\) corresponds to an \(r\)-torsion line bundle \(\vartheta\) on \(Y\), then \(\Tsch(\alpha) \simeq \vartheta \oplus \cdots \oplus \vartheta^{r-1}\) \cite[Remark 4.1.7]{lazarsfeld1}.
\end{example}

\begin{example}
One can construct some interesting examples where the restriction of the standard representation is irreducible. Let \(G \subset \mathfrak{S}_r\) be a subgroup of the symmetric group. Denote the order of \(G\) by \(|G|\). For an element \(g \in G\), let \(F(g)\) denote the number of fixed points of \(g\) in the permutation representation of \(\mathfrak{S}_r\). By character theory, the restriction of the standard representation to \(G\) is irreducible if and only if  
\begin{equation}\label{eqn-character}
    \sum_{g \in G} F(g)^2 = 2 |G|.
\end{equation}
For \(r> 3\), the alternating group \(A_r\) satisfies \eqref{eqn-character}. To see this, use the identity $$\sum_{g\in G} F(g)^2 = \sum_{1 \leq i, j \leq r} \#\{g \in G : g \  \mbox{fixes} \  i, j\}$$ for $G = A_r$ and separately consider the cases $i=j$ and $i \not=j$ to obtain
$$\sum_{1 \leq i\not=j \leq r} \#\{g \in A_r : g \  \mbox{fixes} \  i, j\} + \sum_{1 \leq i \leq r} \#\{g \in A_r : g \  \mbox{fixes} \  i\} = \binom{r}{2} \cdot 2 \frac{(r-2)!}{2} + r \cdot \frac{(r-1)!}{2} = 2 | A_r|.$$  

Let \(q\) be a prime power relatively prime to \(n\).  The permutation action of \(\PGL_2(\mathbb{F}_q)\) or \(\PSL_2(\mathbb{F}_q)\) acting on \(\pp^1\) realizes these groups as subgroups of \(\mathfrak{S}_r\), where \(r = q+1\). By separately considering the different Jordan canonical forms, one can check that these groups satisfy \eqref{eqn-character}.  For example in the case of $\PGL_2(\mathbb{F}_q)$, we have $(q-2) \binom{q+1}{2}$ elements with exactly 2 fixed points, $(q-1)(q+1)$ elements with exactly 1 fixed point and the identity has $q+1$ fixed points. This immediately yields \eqref{eqn-character}. Similarly, the group of affine transformations on \(\mathbb{A}^1(\mathbb{F}_q)\) is naturally a subgroup of \(\mathfrak{S}_q\) and by an analogous  case work satisfies \eqref{eqn-character}.
\end{example}

\section{Covers of an elliptic curve}\label{sec-genus1}

We will construct genus \(g\) covers of an elliptic curve \(E\) with semistable Tschirnhausen bundle by  identifying \(g\) pairs of points in fibers of a cyclic \'etale cover \(\alpha \colon E' \to E\).  The following lemma describes how identifying two points in a fiber affects the resulting Tschirnhausen bundle.

Let \(\alpha \colon X \to Y\) be a degree \(r\) cover and let \(p \in Y\) be a point.  Then the fiber \(\alpha_*\O_X|_p\) is identified with functions on the fiber \(\alpha^{-1}(p)\).  For any two points \(q, q' \in \alpha^{-1}(p)\), there is a codimension \(1\) subspace \(\Delta_{q, q'} \subset \alpha_*\O_X|_p\) where the values at \(q\) and \(q'\) agree.   

Given a vector bundle \(V\) on a curve \(Y\), a Cartier divisor \(D \subset Y\) and a subspace \(\Delta \subseteq V|_D\), the \defi{negative elementary modification of \(V\) along \(D\) towards \(\Delta\)}, denoted \(V[D \negmod \Delta]\), is defined by the exact sequence
\[ 0 \to V[D \negmod \Delta] \to V \to V|_D/\Delta \to 0.\]

Let \(X'\) be the nodal curve obtained from \(X\) by identifying two points \(q\) and \(q'\) in the fiber over \(p \in Y\) and write \(\alpha' \colon X' \to Y\) for the new cover.

\begin{lem}\label{lem:Tsch_mod}
The bundle \(\DTs(\alpha')\) is the negative elementary modification \(\DTs(\alpha)[p \to \Delta_{q, q'}]\) of \(\DTs(\alpha)\) towards \(\Delta_{q,q'}\), which naturally fits in the exact sequence
 \[ 0 \to \DTs(\alpha') \to \DTs(\alpha) \to \DTs(\alpha)|_p / \Delta_{q_1,q_2} \to 0.\]
\end{lem}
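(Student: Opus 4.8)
The plan is to realize both covers through the normalization $\nu\colon X \to X'$, which is an isomorphism away from identifying $q$ and $q'$ to a single node $n$, and to track structure sheaves under this identification. Writing $\kappa_n$ for the length-one skyscraper sheaf at $n$, the defining property of the node is the short exact sequence of sheaves on $X'$
\[ 0 \to \O_{X'} \to \nu_* \O_X \to \kappa_n \to 0, \]
in which the surjection sends a function $f$ to the difference $f(q) - f(q')$ of its two values at the branches of the node; this is precisely the assertion that $\O_{X'}$ consists of those functions on $X$ whose values at $q$ and $q'$ agree.

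The first step is to push this sequence forward along $\alpha'$. Since $\alpha'$ is finite, $\alpha'_*$ is exact, and since $\alpha = \alpha' \circ \nu$ we have $\alpha'_* \nu_* \O_X = \alpha_* \O_X$. Writing $\kappa_p$ for the skyscraper at $p = \alpha'(n)$, we obtain
\[ 0 \to \alpha'_* \O_{X'} \to \alpha_* \O_X \xrightarrow{\ \phi\ } \kappa_p \to 0, \]
where, on the fiber $\alpha_* \O_X|_p$ of functions on $\alpha^{-1}(p)$, the map $\phi$ is the evaluation-difference $f \mapsto f(q) - f(q')$. By definition $\ker(\phi|_p) = \Delta_{q,q'}$, so this already exhibits $\alpha'_* \O_{X'}$ as the negative elementary modification $(\alpha_* \O_X)[p \to \Delta_{q,q'}]$.

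It remains to pass from $\alpha_* \O_X$ to its Tschirnhausen summand. The second step is to observe that $\phi$ annihilates the canonical subsheaf $\O_Y \subset \alpha_* \O_X$: the pullback of a function from $Y$ is constant along each fiber, so its values at $q$ and $q'$ coincide, whence $\O_Y|_p \subset \Delta_{q,q'}$ and $\phi$ factors through the projection to $\DTs(\alpha)$. Moreover $\phi|_{\DTs(\alpha)|_p}$ is still surjective onto $\kappa_p$, since there exist trace-zero functions separating $q$ from $q'$, so its kernel $\Delta_{q,q'} \cap \DTs(\alpha)|_p$ is a hyperplane in $\DTs(\alpha)|_p$. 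Because the splitting $\alpha_* \O_X = \O_Y \oplus \DTs(\alpha)$ is given by $1/r$ times the trace, and the trace is compatible with $\nu$ (it is computed fiberwise and $\nu$ is an isomorphism over the generic point of $Y$), the analogous splitting for $\alpha'$ is the restriction of the one for $\alpha$; that is, $\O_Y \subset \alpha'_* \O_{X'} \subset \alpha_* \O_X$ and $\DTs(\alpha') = \DTs(\alpha) \cap \alpha'_* \O_{X'}$. Intersecting the displayed sequence with the summand $\DTs(\alpha)$ then yields
\[ 0 \to \DTs(\alpha') \to \DTs(\alpha) \to \DTs(\alpha)|_p/\Delta_{q,q'} \to 0, \]
identifying $\DTs(\alpha')$ with the modification $\DTs(\alpha)[p \to \Delta_{q,q'}]$.

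I expect the main obstacle to be the compatibility of the two trace splittings, namely verifying that the canonical $\O_Y$-summand of $\alpha'_* \O_{X'}$ is \emph{literally} the restriction of that of $\alpha_* \O_X$, rather than merely abstractly isomorphic; this is what upgrades the abstract modification of $\alpha_*\O_X$ to the asserted modification of the Tschirnhausen bundle. Everything else is a formal consequence of the normalization sequence and the exactness of $\alpha'_*$, the only further point being the check that $\phi$ does not kill all of $\DTs(\alpha)|_p$, so that the modification is genuinely of the stated rank and degree.
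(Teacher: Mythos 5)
Your proof is correct and takes essentially the same route as the paper: the paper's proof consists of citing the fact that \(\alpha'_*\O_{X'}\) is the negative elementary modification \((\alpha_*\O_X)[p \to \Delta_{q,q'}]\) (which you derive from the normalization sequence and exactness of \(\alpha'_*\)) together with a commutative diagram whose left column is precisely your trace-compatibility claim that the \(\O_Y\)-summands of \(\alpha'_*\O_{X'}\) and \(\alpha_*\O_X\) coincide, and whose right column records that the evaluation-difference map factors through, and stays surjective on, \(\DTs(\alpha)|_p\) because constants lie in \(\Delta_{q,q'}\). You have simply written out the verifications that the paper's diagram leaves implicit.
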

\begin{proof} This follows from the fact that \(\alpha'_*\O_{X'}\) is a negative elementary modification of \(\alpha_*\O_X\) towards \(\Delta_{q, q'}\) and the following commutative diagram.
\begin{center}
\begin{tikzcd}
 & & 0 & 0 \\
&& \O_{\alpha^{-1}(p)}/\Delta_{q, q'} \arrow[r] \arrow[u]& \DTs(\alpha)|_p/\Delta_{q, q'} \arrow[u] \\
0 \arrow[r] & \O_Y \arrow[r] \arrow[d, equals] & \alpha_*\O_X  \arrow[r] \arrow[u]& \DTs(\alpha) \arrow[r] \arrow[u]& 0\\
 0 \arrow[r] & \O_Y \arrow[r] & \alpha'_*\O_{X'} \arrow[u] \arrow[r] & \DTs(\alpha') \arrow[r] \arrow[u]& 0 \\
 & & 0 \arrow[u] & 0 \arrow[u] 
\end{tikzcd}
\end{center}
\end{proof}

Using Lemma \ref{lem:Tsch_mod}, we will produce a genus \(g\) degree \(r\) cover of \(E\) with semistable Tschirnhausen bundle.  Let \(\alpha \colon E' \to E\) be a cyclic \'etale cover of \(E\) of degree \(r\).  The action of a choice of generator \(\tau\) for the cyclic subgroup \(\zz/r\zz \simeq \ker \alpha_* \subseteq \Pic^0E'[r]\) yields an automorphism \(\tau \colon E' \to E'\) that commutes with the map \(\alpha\), i.e., $\tau$ induces a cyclic ordering of the fibers of \(\alpha\).  Let \(q_1, \dots, q_g \in E'\) be \(g\) general points in \(E'\) and let \(q_i' \colonequals \tau(q_i)\).  Write \(p_i = \alpha(q_i) = \alpha(q_i')\).  This is illustrated in the diagram below.
\begin{center}
\begin{tikzpicture}[scale=.7]
\draw (0, 0) .. controls (2, 0.5) and (6, -0.5) .. (8, 0);
\draw[->] (4, 1.5) -- (4, 0.5);
\draw (4.4, 1) node{\(\alpha\)};
\draw (0, 2) .. controls (1, 2) and (8, 2) .. (8, 3);
\draw (0, 4) .. controls (0, 3) and (8, 4) .. (8, 3);
\draw (0, 4) .. controls (0, 5) and (8, 4) .. (8, 5);
\draw (0, 6) .. controls (1, 6) and (8, 6) .. (8, 5);
\draw (-0.5, 0) node{\(E\)};
\draw (-0.5, 4) node{\(E'\)};
\filldraw (1, 0.132) circle[radius=0.03];
\filldraw (3, 0.08) circle[radius=0.03];
\filldraw (7, -0.132) circle[radius=0.03];
\draw (1, -0.17) node{\(p_1\)};
\draw (3, -0.23) node{\(p_2\)};
\draw (5, -0.40) node{\(\cdots\)};
\draw (7, -0.44) node{\(p_g\)};
\draw (5, 4) node{\(\cdots\)};
\filldraw (7, 4.585) circle[radius=0.03];
\filldraw (7, 5.553) circle[radius=0.03];
\filldraw (3, 4.498) circle[radius=0.03];
\filldraw (3, 3.502) circle[radius=0.03];
\filldraw (1, 2.003) circle[radius=0.03];
\filldraw (1, 3.587) circle[radius=0.03];
\draw (1, 3.32) node{\(q_1\)};
\draw (1, 1.67) node{\(q_1'\)};
\draw (3, 4.23) node{\(q_2\)};
\draw (3, 3.17) node{\(q_2'\)};
\draw (7, 4.23) node{\(q_g'\)};
\draw (7, 5.27) node{\(q_g\)};
\end{tikzpicture}
\end{center}
\noindent
Write \(X\) for the curve of genus \(g\) obtained from \(\alpha \colon E' \to E\) by identifying  \(q_i\) and \(q_i'\) for $1 \leq i \leq g$.  Write \(\alpha' \colon X \to E\) for the induced (ramified) degree \(r\) cover of \(E\).

\begin{prop}\label{prop-coverelliptic}
With the setup above, the Tschirnhausen bundle of the resulting degree \(r\) cover \(\alpha' \colon X \to E\) is semistable.
\end{prop}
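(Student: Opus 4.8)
The plan is to combine the explicit shape of $\DTs(\alpha)$ for the cyclic cover $\alpha$ with the elementary modification of Lemma~\ref{lem:Tsch_mod}, and then to reduce semistability of $\DTs(\alpha')$ to a counting statement measuring how a subbundle can meet the modification data at the general points $p_i$. First I would record the unmodified bundle: since $\alpha\colon E'\to E$ is cyclic étale of degree $r$, corresponding to a line bundle $\vartheta$ of exact order $r$, Example~\ref{ex:cyclic} gives $\DTs(\alpha)\cong\bigoplus_{j=1}^{r-1}\vartheta^j$. This is polystable of slope $0$, and because the $\vartheta^j$ are pairwise non-isomorphic degree-$0$ line bundles on the elliptic curve $E$ (so that $\operatorname{Hom}(\vartheta^a,\vartheta^b)=\operatorname{Ext}^1(\vartheta^a,\vartheta^b)=0$ for $a\not\equiv b \bmod r$), its only degree-$0$ subbundles are the partial sums $\bigoplus_{j\in J}\vartheta^j$. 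In the fiber over a point $p$, identified with functions on $\alpha^{-1}(p)$ modulo constants, the summand $\vartheta^j$ is spanned by the character vector $e_j$, while the hyperplane $\Delta_i\subseteq\DTs(\alpha)|_{p_i}$ toward which we modify is the locus where the values on sheets $i$ and $i+1$ agree.

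By Lemma~\ref{lem:Tsch_mod}, $\DTs(\alpha')$ is the iterated negative elementary modification of $\DTs(\alpha)$ at $p_1,\dots,p_g$ toward $\Delta_1,\dots,\Delta_g$; in particular it has rank $r-1$ and slope $-g/(r-1)$. Given a subbundle $F\subseteq\DTs(\alpha')$ of rank $k$, let $\bar F\subseteq\DTs(\alpha)$ be its saturation. A local computation at each $p_i$ shows that $F=\bar F\cap\DTs(\alpha')$ and that
\[\deg F=\deg\bar F-\#\{\,i:\bar F|_{p_i}\not\subseteq\Delta_i\,\},\]
while $\deg\bar F\le 0$ because $\DTs(\alpha)$ is polystable of slope $0$. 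Semistability of $\DTs(\alpha')$ is therefore equivalent to the inequality $\#\{i:\bar F|_{p_i}\not\subseteq\Delta_i\}\ge \tfrac{gk}{r-1}+\deg\bar F$ for every subbundle $\bar F\subseteq\DTs(\alpha)$ of each rank $k$. When $\deg\bar F\le-\tfrac{gk}{r-1}$ this is automatic, so only subbundles whose degree is close to $0$ require attention.

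At the extreme $\deg\bar F=0$, the classification above forces $\bar F=\bigoplus_{j\in J}\vartheta^j$, whose fiber $\operatorname{span}\{e_j\}_{j\in J}$ is never contained in any $\Delta_i$: the functional defining $\Delta_i$ pairs nontrivially with each $e_j$, since its $j$-th coordinate is $\zeta^{ji}(1-\zeta^j)\ne 0$. Thus the count is $g$, and the inequality holds with room to spare; this is the clean anchor of the argument. The remaining, and principal, difficulty is the range of intermediate negative degrees. Here I would argue that the inequality fails only on a proper closed subset of the configuration space $E^g$, so that it holds for general $(p_1,\dots,p_g)$. For fixed corank $\ell=r-1-k$ and degree $-d$, the subbundles $W=\bar F^{\perp}\subseteq\Tsch(\alpha)$ form a bounded family of dimension governed by $\chi(W^{\vee}\otimes(\Tsch(\alpha)/W))=(r-1)d$, and each requirement ``$\bar F|_{p_i}\subseteq\Delta_i$'' is the condition that $W|_{p_i}$ contain the adjacent-sheet-difference vector $v_i$, a Schubert-type condition of expected codimension $k$. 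A dimension count then shows that a configuration admitting some $W$ with more than $\tfrac{g\ell}{r-1}+d$ such incidences sweeps out a locus of dimension less than $g$, and one concludes by letting $(\ell,d)$ range over the finitely many relevant values.

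The crux is to make this count rigorous when $k\ge 2$. For $k=1$ the incidence conditions have corank $1$ and the elementary estimates already close the argument, but for $k\ge 2$ one must verify that the conditions imposed at distinct general points are independent of the expected codimension $k$ — equivalently, that the evaluation map from the family of subbundles to the relevant product of Grassmannians is suitably transverse to the Schubert cycles. I expect this transversality, rather than any of the preceding bookkeeping, to be the main obstacle, and to be exactly where the generality of the $p_i$ is used, together with the structural fact that the adjacent-difference vectors $v_0,\dots,v_{r-1}$ span each fiber subject only to the single relation $\sum_s v_s=0$.
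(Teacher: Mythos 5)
Your reduction is correct as far as it goes: the identification \(\DTs(\alpha)\simeq\bigoplus_{j=1}^{r-1}\vartheta^j\), the classification of its degree-\(0\) subbundles as partial sums, the degree formula \(\deg F=\deg\bar F-\#\{i:\bar F|_{p_i}\not\subseteq\Delta_i\}\), and the computation \(\zeta^{ji}(1-\zeta^j)\neq 0\) disposing of the degree-\(0\) case all match the paper (the last point is its invocation of Remark~\ref{rem-subrep}). But the heart of the proposition is exactly the range you leave open: subsheaves \(\bar F\) of rank \(k\ge 2\) and intermediate degree \(-d\) with \(0<d<gk/(r-1)\). Your incidence count closes only if each condition ``\(v_i\in W|_{p_i}\)'' cuts the family of subsheaves in the full expected codimension \(k\), and it then closes with \emph{no slack at all} (equality at \(d=gk/(r-1)\)), so no defect can be absorbed. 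You explicitly defer this transversality, but it is not a routine verification: the vectors \(v_i\) are not general (they are the adjacent-difference vectors determined by the cover, subject to \(\sum_s v_s=0\)), and a single point \(p_i\) moves on a curve, so the codimension-\(k\) condition must be absorbed by the family of subsheaves --- about which you know only an expected-dimension estimate. Indeed \(\chi(\bar F^\vee\otimes Q)=(r-1)d\) is only a lower bound for the local dimension of the Quot scheme (the obstruction space \(H^1(\bar F^\vee\otimes Q)\) need not vanish), so even your dimension bound on the family requires an argument. As written, the proposal establishes the proposition only for \(k=1\), where, as you note, elementary estimates suffice; for \(k\ge 2\) it is a plan with its central step unproven.

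The paper closes precisely this gap by a mechanism your plan has no analogue of: a divisibility argument. Since the maximal destabilizing subbundle \(F\subset\DTs(\alpha')\) is canonically determined by the discrete data and the points, \((p_1,\dots,p_g)\mapsto\det F\) defines a symmetric morphism \(E^g\to\Pic^{\deg F}E\), which by the rigidity theorem is of the form \(a(p_1+\cdots+p_g)+D\); canonicity of \(D\) (using irreducibility of \(X_0(r)\) and invariance under translation) forces \(\deg D=0\), hence \(g\mid\deg F\). Combined with \(-g<\frac{(r-1)\deg F}{\rk F}<0\) and \(\rk F\le r-1\), this gives an immediate contradiction, uniformly in \(k\) and \(d\), with no transversality or Quot-scheme bounds needed. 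In other words, the paper uses the generality of the \(p_i\) globally, through how the destabilizing datum varies over \(E^g\), rather than pointwise through a dimension count; to complete your route you would have to supply exactly the transversality you flagged as the main obstacle.
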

\begin{proof}
By Lemma \ref{lem:Tsch_mod}, we have
\[\DTs(\alpha') \simeq \DTs(\alpha)[p_1 \to \Delta_{q_1, q_1'}] \cdots [p_g \to \Delta_{q_g, q_g'}]. \]
The bundle \(\DTs(\alpha')\) has degree \(-g\).

Suppose to the contrary that \(\DTs(\alpha')\) was not semistable, and let \(F \subset \DTs(\alpha')\) be the maximal destabilizing subbundle, which is also a subsheaf of \(\DTs(\alpha)\).  Since \(\DTs(\alpha)\) is semistable by Proposition \ref{prop-etale},
and of slope \(0\) by Lemma~\ref{lem-degTsch}, the slope of \(F\) is at most \(0\). 
Furthermore, by Remark \ref{rem-subrep}, any degree \(0\) subsheaf of \(\DTs(\alpha)\) corresponds to a subrepresentation of the standard representation of \(\Gal(E'/E) = \zz/r\zz\). Since the only subrepresentation of the standard representation of \(\zz/r\zz\) contained in a diagonal (where two coordinates agree) is zero, we conclude that no such degree \(0\) subbundle could be contained in \(\DTs(\alpha')\).
We therefore have
\[-g < \left( \frac{r-1}{\rk F}\right) \deg (F) < 0.\]
We will conclude the proof by showing that \(\deg(F)\) is divisible by \(g\).  Since \(\rk F \leq r-1\), this yields a contradiction.

The maximal destabilizing subbundle is canonically defined from the unordered set of points \(\{q_1, \dots, q_g\}\), the data of the cover \(\alpha \colon E' \to E\), and the generator \(\tau\). 
Fixing \(\alpha\colon E' \to E\) and \(\tau\), the determinant of \(F\) gives a rational map
\[d \colon \Sym^g E' \to \Pic^{\deg F} E,\]
which necessarily extends to a regular map since \(\Pic^{\deg F} E\) is a torsor under an abelian variety and \(\Sym^g E'\) is smooth. 
Any such map factors through the Abel--Jacobi map \(\Sym^gE' \to \Pic^g E'\).  We thus have a canonically-defined map
\[d \colon \Pic^g E' \to \Pic^{\deg F}E\]
from the data of the cyclic cover \(\alpha \colon E' \to E\) and the generator \(\tau\).  We will use the naturality of this map \(d\) to deduce that \(g\) divides \(\deg F\).

Translation on \(E'\) by a point \(\theta \in \Pic^0E'\), and on \(E\) by the norm \(\alpha_*\theta \in \Pic^0E\), commute with the map \(\alpha\):
\begin{center}
    \begin{tikzcd}
    E' \arrow{r}{+\theta} \arrow{d}{\alpha} & E' \arrow{d}{\alpha}\\
    E \arrow{r}{+\alpha_*\theta} & E
    \end{tikzcd}
\end{center}
Since translation acts as the identity on \(\Pic^0E'\), this automorphism also preserves \(\tau\).  
Specialize to the case that \(\theta\) is a point of order \(g\) for which \(\alpha_*\theta\) also has order \(g\). (Such a point always exists since the kernel of \(\alpha_*\) is isomorphic to \(\zz/r\zz\), while the \(g\)-torsion is isomorphic to \(\zz/g\zz \times \zz/g\zz\).) Translation by \(\theta\) therefore acts as the identity on \(\Pic^gE'\).  Hence it must act by the identity on \(\Pic^{\deg F}E\). 
But translation by \(\alpha_*\theta\) acts on \(\Pic^{\deg F}E\) by addition of \((\deg F)\alpha_*\theta\).  In order for this to be the identity, \(g\) must divide \(\deg F\).
\end{proof}

\section{Admissible covers and the general case}\label{sec-generalcase}

In this section we prove Theorem \ref{thm:tsch}. We use the theory of admissible covers to inductively reduce to the case when \(h=1\).  

Recall that a finite, surjective, generically \'etale cover of nodal curves \(\alpha \colon X \to Y\) is an  \defi{admissible cover} if for every point $x\in X$ whose image is a singular point $y \in Y$ the ramification indices on both branches at $x$ are equal to a common positive integer $n$. In other words, there are isomorphisms of completed local rings  

\begin{center}
    \begin{tikzcd}
    \hat{\O}_{X,x} \arrow{r}{\simeq}  & k[[s,t]]/(st)  \\
    \hat{\O}_{Y,y} \arrow{r}{\simeq} \arrow{u}& k[[u,v]]/(uv)\arrow[swap]{u}{(u,v)\mapsto (s^n,t^n)}
    \end{tikzcd}
\end{center}
Then such a cover may be deformed to a smooth cover. This was proved by Harris and Mumford in \cite{hm82} with some extraneous assumptions which were later removed. We refer the reader to  \cite[Proposition 5.4]{liu} for a nice exposition.

\subsection{Tschirnhausen bundles and admissible covers}

We will consider admissible covers of the following shape.
Let \(Y_1 \cup_p Y_2\) be a nodal curve composed of two smooth curves meeting at a point \(p\).  Let 
\[\alpha \colon X_1 \cup X_2 \to Y_1 \cup Y_2\]
be an admissible cover and write 
\[\alpha|_{X_1} \colon X_1 \to Y_1 \quad \text{and} \quad \alpha|_{X_2} \colon X_2 \to Y_2\]
for the maps on each component.

\begin{lem}
Suppose that \(\alpha \colon X_1 \cup X_2 \to Y_1 \cup_p Y_2\) is an admissible cover that is \'etale over \(p\).  For any line bundle \(L\) on \(X_1 \cup X_2\), the pushforward \(\alpha_*L\) is a vector bundle satisfying \((\alpha_*L)|_{Y_i} \simeq \alpha|_{X_i}{}_*(L|_{X_i})\).  In particular, we have 
\( \DTs(\alpha)|_{Y_i} \simeq \DTs(\alpha_i)\).
\end{lem}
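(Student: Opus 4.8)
The plan is to reduce everything to the local structure of $\alpha$ over the node $p$, where the \'etale hypothesis is decisive, and then to deduce the global statements by a formal argument. First I would record the local picture: since $\alpha$ is \'etale over $p$, the fiber $\alpha^{-1}(p)$ consists of exactly $r$ nodes $z_1, \dots, z_r$ of $X_1 \cup X_2$, and near each $z_i$ the map is an isomorphism of nodal germs onto a neighborhood of $p$, matching one branch of $X_1$ to $Y_1$ and one branch of $X_2$ to $Y_2$. In particular $\alpha$ is finite and flat: flatness is automatic at smooth points mapping to smooth points, while the \'etale hypothesis supplies it at the nodes over $p$ (by the definition of an admissible cover there are no smooth points of $X_1 \cup X_2$ over $p$ and no nodes over the smooth locus). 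Since a finite flat pushforward of a line bundle is locally free of rank equal to the degree, $\alpha_* L$ is a vector bundle of rank $r$, and likewise each $(\alpha|_{X_i})_*(L|_{X_i})$ is a vector bundle of rank $r$ on $Y_i$.

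For the comparison of restrictions I would produce a natural map and show it is an isomorphism by a rank count. The restriction of $L$ to the component $X_i$ is a surjection $L \twoheadrightarrow L|_{X_i}$ of sheaves on $X_1 \cup X_2$, and applying the exact functor $\alpha_*$ gives a surjection $\alpha_* L \twoheadrightarrow (\alpha|_{X_i})_*(L|_{X_i})$. The target is an $\O_{Y_i}$-module, so this surjection is killed by the ideal sheaf of $Y_i$ and therefore factors through the restriction, yielding a surjection
\[ (\alpha_* L)|_{Y_i} \twoheadrightarrow (\alpha|_{X_i})_*(L|_{X_i}). \]
Both sides are vector bundles of rank $r$ on the smooth integral curve $Y_i$ (the left side because $\alpha_* L$ is locally free on the nodal curve and $Y_i$ is a component), so any surjection between them is an isomorphism, its kernel being a torsion subsheaf of a locally free sheaf and hence zero. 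This establishes $(\alpha_* L)|_{Y_i} \simeq (\alpha|_{X_i})_*(L|_{X_i})$.

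Finally I would specialize to $L = \O_{X_1 \cup X_2}$. The trace map $\mathrm{Tr}\colon \alpha_* \O_{X_1 \cup X_2} \to \O_{Y_1 \cup_p Y_2}$ exists because $\alpha$ is finite flat, and since the characteristic is zero or exceeds $r$ the splitting by $\tfrac{1}{r}\mathrm{Tr}$ produces the decomposition $\alpha_*\O = \O \oplus \DTs(\alpha)$ on the nodal base exactly as in the smooth case. The trace is compatible with restriction to a component, so restricting the splitting to $Y_i$ matches it with the trace splitting for $\alpha|_{X_i}$; combined with the previous paragraph this gives $\DTs(\alpha)|_{Y_i} \simeq \DTs(\alpha|_{X_i})$. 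The hard part will be the behavior at the node $p$: restriction to a component of a nodal curve is only right exact, so the identification of $(\alpha_* L)|_{Y_i}$ would fail if $\alpha_* L$ were not locally free there. The \'etale hypothesis is exactly what guarantees flatness, hence local freeness, over $p$, and also what makes the two branches over $p$ separate cleanly into the $r$ nodes; verifying this local picture is the crux, after which the global argument is purely formal.
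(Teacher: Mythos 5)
Your proof is correct and rests on exactly the same idea as the paper's (two-sentence) proof: away from \(p\) the claim is immediate, and the \'etale hypothesis forces the map near each point of \(\alpha^{-1}(p)\) to be an isomorphism of nodal germs, i.e., \'etale-locally a disjoint union of \(r\) copies of the identity. The only difference is that you spell out the globalization the paper calls immediate --- flatness and local freeness of \(\alpha_*L\), the surjection \((\alpha_*L)|_{Y_i} \twoheadrightarrow (\alpha|_{X_i})_*(L|_{X_i})\) killed by a rank count, and compatibility of the trace splitting with restriction --- all of which is sound.
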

\begin{proof}
Since \(\alpha\) is \'etale over the node, the scheme-theoretic preimage of \(Y_i\) is \(X_i\).
Hence we have a Cartesian square
\begin{center}
\begin{tikzcd}
X_i \arrow{r} \arrow{d} & X_1 \cup X_2 \arrow{d} \\
Y_i \arrow{r} & Y_1 \cup Y_2.
\end{tikzcd}
\end{center}
Since \(\alpha\) is finite, the higher direct images \(R^i \alpha_* L\) vanish. The theorem on cohomology and base change thus provides the desired isomorphism
\((\alpha_*L)|_{Y_i} \to \alpha|_{X_i}{}_*(L|_{X_i})\).
\end{proof}

By \cite[Lemma 4.1]{clv}, a vector bundle on the reducible nodal curve \(Y_1 \cup_p Y_2\) is stable if its restriction to one component is semistable, and its restriction to the other component is stable.  Using this, we obtain the following corollary.

\begin{cor}\label{cor-inductive}
Suppose that \(\alpha \colon X_1 \cup X_2 \to Y_1 \cup_p Y_2\) is an admissible cover that is \'etale over \(p\).  If \(\DTs(\alpha_1)\) is stable and \(\DTs(\alpha_2)\) is semistable, then \(\DTs(\alpha)\) is stable.
\end{cor}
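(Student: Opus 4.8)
The plan is to combine the two facts established immediately above: the restriction lemma computing \(\DTs(\alpha)|_{Y_i}\), and the stability criterion \cite[Lemma 4.1]{clv} for vector bundles on a reducible nodal curve. The argument should be essentially immediate; there is no genuine obstacle here beyond correctly matching up the hypotheses, since the real content has already been isolated in the two inputs.

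First I would invoke the preceding lemma to identify the restrictions of \(\DTs(\alpha)\) to the two components. Because \(\alpha\) is \'etale over the node \(p\), that lemma (applied with \(L = \O_{X_1 \cup X_2}\)) yields canonical isomorphisms
\[
\DTs(\alpha)|_{Y_1} \simeq \DTs(\alpha_1) \qand \DTs(\alpha)|_{Y_2} \simeq \DTs(\alpha_2).
\]
By hypothesis the first of these is stable and the second is semistable.

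Next I would apply \cite[Lemma 4.1]{clv}: a vector bundle on \(Y_1 \cup_p Y_2\) is stable provided its restriction to one component is stable and its restriction to the other is semistable. Taking the component with stable restriction to be \(Y_1\) and the one with semistable restriction to be \(Y_2\), we conclude at once that \(\DTs(\alpha)\) is stable.

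The only point that warrants a moment's attention is that the notion of stability on the nodal curve \(Y_1 \cup_p Y_2\) supplied by \cite{clv} is exactly the one we wish to carry through the later degeneration argument, so that the output of this corollary feeds correctly into the induction on \(h\). Since we are entitled to assume that lemma as stated, no difficulty arises, and the corollary follows formally from the two preceding results.
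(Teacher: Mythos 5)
Your proof is correct and matches the paper's argument exactly: restrict \(\DTs(\alpha)\) to each component via the preceding lemma (using that \(\alpha\) is \'etale over \(p\)), then apply \cite[Lemma 4.1]{clv} with the stable restriction on \(Y_1\) and the semistable one on \(Y_2\). Nothing further is needed.
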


\subsection{Proof of Theorem \ref{thm:tsch}} 
We will reduce part \eqref{main_h1} to  Proposition \ref{prop-coverelliptic}. Let $E$ be an elliptic curve and let $\alpha'\colon X \to E$ be the cover constructed in  Proposition \ref{prop-coverelliptic} so that $\DTs(\alpha')$ is semistable. By \cite[Proposition 5.4]{liu}, $\alpha'$ can be smoothed. Let $\beta$ be a general smoothing of $\alpha'$. By the openness of semistability \cite[Proposition 2.3]{clv}, $\DTs(\beta)$ is semistable. By the Riemann--Hurwitz formula, the map $\beta$ cannot be \'etale, hence by Proposition \ref{prop-unstable} $\beta$ must be primitive. By Proposition \ref{prop-Hurwitz}, the Hurwitz space $\mathscr{H}_{r,g}(E)$ parameterizing primitive, simply-branched, degree $r$, genus $g$ covers of $E$ is irreducible. Hence $\beta$ lies in $\mathscr{H}_{r,g}(E)$ and $\DTs(\beta)$ is semistable for the general primitive cover.

 To prove part \eqref{main_h2} of the theorem, we argue by induction on \(h\). Specialize \(Y\) to the union \(Y' \colonequals Y_1 \cup Y_2\), where \(Y_1\) is an elliptic curve and \(Y_2\) is of genus \(h - 1\), which meet at a node \(p\).

Let \(\alpha_1\colon X_1 \to Y_1\) be a general primitive degree \(r\) cover of \(Y_1\) with two ramification points such that \(\alpha_1\) is unramified over \(p\). Let \(\alpha_2\colon X_2 \to Y_2\) be a general primitive-or-\'etale degree \(r\) cover of \(Y_2\) with two fewer ramification points and unramified over \(p\). By induction, both \(\DTs(\alpha_1)\) and \(\DTs(\alpha_2)\) are semistable.
By Lemma \ref{lem-degTsch}, the degree of \(\DTs(\alpha_1)\) is \(-1\). Consequently, the rank and the degree of \(\DTs(\alpha_1)\) are relatively prime and  the bundle \(\DTs(\alpha_1)\) is in fact stable.

Let \(\alpha'\colon X' \to Y'\) be the cover obtained by gluing  \(X' = X_1 \cup X_2\) so that \(\alpha'|_{X_i} = \alpha_i\).  By \cite[Proposition 5.4]{liu}, we can smooth $\alpha'$. Let $\beta\colon X \to Y$ be a general smoothing of $\alpha'$. By Corollary \ref{cor-inductive},  the Tschirnhausen bundle $\DTs(\beta)$ is stable.  By the Riemann--Hurwitz formula, $\beta$ is not \'etale. Consequently, by Proposition \ref{prop-unstable}, the map $\beta$ must be primitive. Hence,  $\beta$   lies in the irreducible component of primitive covers. We conclude that for the general primitive cover $\DTs(\beta)$ is stable.
\qed

\begin{rem}
When the characteristic of the base field is larger than \(r\), this argument proves something stronger. By Atiyah's classification \cite{at57},  (semi)stable sheaves on a genus one curve are strongly (semi)stable. Observe that the statement of  \cite[Lemma 4.1]{clv} for strong (semi)stability follows formally from the statement of \cite[Lemma 4.1]{clv} for (ordinary) (semi)stability.  We thus conclude the following corollary.

\begin{cor}\label{cor-strong}
Let \(\alpha\colon X \to Y\) be a general primitive degree \(r\) cover, where \(X\) has genus \(g\) and \(Y\) has genus \(h\), over an algebraically closed field of characteristic greater than \(r\).
\begin{enumerate}
    \item If \(h=1\), then \(\Tsch(\alpha)\) is strongly semistable.
    \item If \(h \geq 2\), then \(\Tsch(\alpha)\) is strongly stable.
    \end{enumerate}
\end{cor}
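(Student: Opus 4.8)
The plan is to run the proof of Theorem~\ref{thm:tsch} essentially verbatim, tracking strong (semi)stability at each stage via the two inputs isolated in the preceding remark: on a genus one curve every (semi)stable bundle is automatically strongly (semi)stable by Atiyah's classification \cite{at57} (pullback by the Frobenius isogeny preserves (semi)stability), and the gluing criterion \cite[Lemma 4.1]{clv} — hence also Corollary~\ref{cor-inductive} — remains valid with ``(semi)stable'' replaced throughout by ``strongly (semi)stable''. The case \(h = 1\) is then immediate: by Theorem~\ref{thm:tsch}\eqref{main_h1} the bundle \(\Tsch(\alpha)\) on the elliptic curve \(Y\) is semistable, and Atiyah promotes this to strong semistability.

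For \(h \geq 2\) I would argue by induction on \(h\), specializing \(Y\) to \(Y' = Y_1 \cup_p Y_2\) with \(Y_1\) elliptic and \(Y_2\) of genus \(h - 1\), and forming the admissible cover \(\alpha'\colon X_1 \cup X_2 \to Y'\) exactly as in Theorem~\ref{thm:tsch}. On \(Y_1\) the bundle \(\DTs(\alpha_1)\) has degree \(-1\) and rank \(r - 1\), hence is stable and, living on a genus one curve, strongly stable. On \(Y_2\) the bundle \(\DTs(\alpha_2)\) is strongly semistable: when \(\alpha_2\) is primitive this is the inductive hypothesis (the base case \(h - 1 = 1\) being the \(h = 1\) statement just proved), and when \(\alpha_2\) is \'etale — which occurs at the minimal genus, where \(Y_2\) may well have genus \(\geq 2\) — it follows from the argument of Proposition~\ref{prop-etale}: since absolute Frobenius commutes with the Galois-closure map \(\beta\), the bundle \(\beta^* F^{n*}\DTs(\alpha_2)\) embeds in a trivial bundle for every \(n\), forcing strong semistability. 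The strong form of \cite[Lemma 4.1]{clv} now shows \(\DTs(\alpha')\) is strongly stable on \(Y'\).

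It remains to pass from the special fiber \(Y'\) to the general smooth fiber. Here I would not try to deform strong stability as such; instead I unwind the conclusion on \(Y'\) one Frobenius power at a time. For each fixed \(n\), strong stability of \(\DTs(\alpha')\) says \(F^{n*}\DTs(\alpha')\) is stable, and ordinary stability is an open condition in the family of Tschirnhausen bundles over the irreducible Hurwitz space; since the reducible special fiber lies in the primitive component by Proposition~\ref{prop-unstable}, the general member \(F^{n*}\DTs(\alpha)\) is therefore stable. As this holds for every \(n\), the general \(\DTs(\alpha)\), equivalently \(\Tsch(\alpha)\), is strongly stable.

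The main obstacle, and the reason for the per-\(n\) detour, is that strong stability is not in general an open condition, so one cannot directly deform the strong stability produced on \(Y'\). Reducing instead to the openness of ordinary stability for each Frobenius pullback sidesteps this entirely, at the cost of one routine compatibility check: that relative Frobenius over the base of the degenerating family restricts fiberwise to the absolute Frobenius used on \(Y'\), so that the stability of \(F^{n*}\DTs(\alpha')\) is genuinely the input required by the openness statement.
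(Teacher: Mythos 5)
Your proposal takes essentially the same route as the paper, whose entire proof of this corollary is the surrounding remark: rerun the induction of Theorem \ref{thm:tsch}, using Atiyah's classification \cite{at57} to promote (semi)stability to strong (semi)stability on genus one curves, and the fact that \cite[Lemma 4.1]{clv} --- hence Corollary \ref{cor-inductive} --- holds verbatim for strongly (semi)stable bundles. Two of your additions are genuine improvements on the paper's terseness. First, you correctly observe that when \(\alpha_2\) is \'etale over a base of genus at least \(2\) (which happens at the minimal number of branch points), neither Atiyah nor the inductive hypothesis applies, and you supply the missing step: pulling \(F^{n*}\DTs(\alpha_2)\) back under the Galois closure embeds it in a trivial bundle, so it is semistable of slope zero for every \(n\). (Equivalently, since \(\alpha_2\) is \'etale the relevant Frobenius square is Cartesian, so \(F^{n*}(\alpha_{2*}\O_{X_2})\) is again an \'etale pushforward and Proposition \ref{prop-etale} applies directly.) Second, flagging the fiberwise compatibility of relative and absolute Frobenius is the right scruple, and your treatment of the case \(h=1\) is complete, since there Atiyah applies to the general cover's bundle with no deformation needed.

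However, your final step for \(h \geq 2\) has a real gap that the per-\(n\) detour does not close. For each fixed \(n\), openness of ordinary stability gives a dense open \(U_n\) in the (irreducible) family of primitive covers on which \(F^{n*}\DTs(\alpha)\) is stable; but ``\(\Tsch(\alpha)\) strongly stable for general \(\alpha\)'' asserts a \emph{single} dense open contained in \(\bigcap_n U_n\). A countable intersection of dense opens only certifies the \emph{very general} cover, and in the setting of this corollary the ground field has characteristic \(p > r\) and may be countable (e.g.\ \(\overline{\mathbb{F}}_p\)), in which case \(\bigcap_n U_n\) can fail to contain any closed point. What your argument does prove cleanly is strong stability of the geometric generic cover, since every dense open \(U_n\) contains the generic point. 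To upgrade ``very general'' to ``general'' one needs uniformity in \(n\) --- for instance Langer's theorem that in a bounded family the Harder--Narasimhan filtrations of Frobenius pullbacks stabilize after a uniform finite power, which makes strong semistability an open condition (with a further word needed for stability of all Frobenius pullbacks). The paper's one-line proof is equally silent about this deformation step, so you are in good company, but the quantifier interchange ``for each \(n\), general \(\alpha\)'' \(\Rightarrow\) ``for general \(\alpha\), all \(n\)'' should be named as the outstanding issue rather than described as routine.
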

\end{rem}

\subsection{Covers of rational curves}\label{sec-rational} Finally for completeness, we give a proof of Proposition \ref{thm-genus0}.
Let \(\alpha\colon X \to Y \simeq \pp^1\) be a degree \(r\) cover. When \(g=g(X)=0\) or, equivalently,  the degree of the ramification divisor is \(2r-2\), then \(\DTs(\alpha)\) is a vector bundle of rank \(r-1\) and degree \(1-r\). Since \(H^0(\alpha_* \O_X) = H^0(\O_{\pp^1}) \oplus H^0(\DTs(\alpha))\), we conclude that \(H^0(\DTs(\alpha))=0\). Consequently, the $r-1$ degrees occurring in the splitting type must all be negative and sum to $1-r$. Therefore, \(\DTs(\alpha) \simeq \O_{\pp^1}(-1)^{\oplus (r-1)}\) and is balanced. Now we induct on the degree of the ramification divisor. 

Suppose we know that 
when the degree of the ramification divisor is less than \(b\),
the general Tschirnhausen bundle is balanced. For \(b \geq 2r-2\), specialize the cover so that \(Y\) is the union of two rational curves \(Y_1 \cup Y_2\)  meeting at a node. Take a degree \(r\) cover of the reducible \(Y\) so that the degree of the ramification divisor of \(\alpha_1\) is \(2r-2\) and the degree of the ramification divisor of \(\alpha_2\) is \(b - 2r + 2\). Assume that the maps \(\alpha_i\) are not ramified over \(p\).

If \(b - 2r + 2 \geq 2r - 2\), then by induction, \(\DTs(\alpha_2)\) is balanced.
Otherwise, we allow the cover over \(Y_2\) to be disconnected, and take
a general cover of degree \(r' \colonequals (b-2r+2)/2\) together with \(r-r'\) components mapping by the identity to \(Y_2\).
In this case, \(\DTs(\alpha_2) \simeq \O_{\pp^1}^{\oplus(r-r')} \oplus \O_{\pp^1}(-1)^{\oplus(r'-1)}\) is still balanced. Since \(\DTs(\alpha_1)\) is perfectly balanced, we conclude that
a general deformation is  balanced, 
since being balanced is equivalent to \(h^1(\End V) = 0\) and hence is open in families.\qed

\bibliographystyle{plain}

\end{document}